\algrenewcommand\algorithmicforall{\textbf{for all}}
\algnewcommand{\FOR}[1]{\State\algorithmicfor\ #1\ \algorithmicdo}
\algnewcommand{\ENDFOR}{\algorithmicend\ \algorithmicfor}
\newtheorem{theorem}{Theorem}
\newtheorem{definition}{Definition}
\newtheorem{remark}{Remark}
\newtheorem{corollary}{Corollary}
\title{Intrinsic and Extrinsic Organized Attention: Softmax Invariance and Network Sparsity}
\author{Oluwadamilola Fasina\textsuperscript{1,2}, Ruben V.C. Pohle\textsuperscript{2,3}, Pei-Chun Su\textsuperscript{1,2}, Ronald R. Coifman\textsuperscript{1,2,4} \\
\textsuperscript{1} Department of Applied and Computational Mathematics, Yale University \\ \textsuperscript{2} Graduate School of Arts and Sciences, Yale University \\ \textsuperscript{3} Medical Sciences Division, University of Oxford \\ \textsuperscript{4} Department of Mathematics, Yale University}   
\begin{document}

\maketitle

\begin{abstract}

We examine the intrinsic (within the attention head) and extrinsic (amongst the attention heads) structure of the self-attention mechanism in transformers. Theoretical evidence for invariance of the self-attention mechanism to softmax activation is obtained by appealing to paradifferential calculus, (and is supported by computational examples), which relies on the intrinsic organization of the attention heads. Furthermore, we use an existing methodology for hierarchical organization of tensors to examine network structure by constructing hierarchal partition trees with respect to the query, key, and head axes of network 3-tensors. Such an organization is consequential since it allows one to profitably execute common signal processing tasks on a geometry where the organized network 3-tensors exhibit regularity. We exemplify this qualitatively, by visualizing the hierarchical organization of the tree comprised of attention heads and the diffusion map embeddings, and quantitatively by investigating network sparsity with the expansion coefficients of individual attention heads and the entire network with respect to the bi and tri-haar bases (respectively) on the space of queries, keys, and heads of the network. To showcase the utility of our theoretical and methodological findings, we provide computational examples using vision and language transformers. The ramifications of these findings are two-fold: (1) a subsequent step in interpretability analysis is theoretically admitted, and can be exploited empirically for downstream interpretability tasks (2) one can use the network 3-tensor organization for empirical network applications such as model pruning (by virtue of network sparsity) and network architecture comparison.\footnote{Code is available at \href{https://github.com/obfasina/OrganizedAttention/}{https://github.com/obfasina/OrganizedAttention/}}

\end{abstract}

\section{Introduction}

Its widely accepted the transformer architecture \cite{vaswani2017attention,tay2022efficient,khan2022transformers,lin2022survey} transformed the fields of language and vision processing \cite{nadkarni2011natural,han2022survey}; however, there still exists a measurable gap between its theoretical underpinnings, and incidentally, the interpretability of the transformer architecture \cite{chefer2021transformer,rigotti2021attention,pan2021ia,yun2019transformers,perez2021attention}. Examples of efforts to close such gaps are \cite{ahn2023transformers,liu2022transformers} who discuss theoretical learning capabilities of transformers and \cite{zhao2024explainability} and \cite{rauker2023toward} who discuss the explainability and internal structure of transformers. Studies of this flavor have aided in closing the gap between theory, empirical findings, and explainability of the fabric of transformer architectures, and we aim to contribute to this by approaching these issues through the conceptual and mathematical lens of tensor organization.

The self-attention mechanism is used in virtually every transformer network and has been proven to be  crucial, in fact, necessary for transformers to learn effectively \cite{abnar2020quantifying,kreuzer2021rethinking}. Indeed, its ability to capture pairwise relationships amongst input tokens allows it to learn the inherent structure of the data in consideration. We, like others, conjecture that exploitation of information carried by the attention heads brings light to salient network structure. We constrain our focus to two types of analysis: intrinsic and extrinsic self-attention organization. We show theoretically - and supplement with computational examples - that if one produces an intrinsic organization of the attention head to have some regularity, the self-attention mechanism is invariant to the softmax function. Secondly, we describe an algorithm that admits an extrinsic organization of the attention heads; that is, an organization among all the heads in the network which allows us to probe the network structure, and consequently, carry out signal processing tasks economically.

\section{Paraproduct Decompositions and Tensor Organization}

We rely on two principal analytical tools for imputing salient intrinsic and extrinsic organization of attention heads: paraproduct decompostions and tensor organization. Paraproduct decompostions, or so called paradifferential calculus,  was initiated by \cite{bony1981calcul} and had a sizable impact in diverse subdisciplines of pure and applied mathematics such as operator theory, fourier analysis, and PDE. \cite{bahouri2011fourier,danchin2005fourier,hormander1990nash}. Informally, paradifferential calculus is concerned with decomposing operators into their low and high frequency constituents. Recently, Bony's work was extended by \cite{fasina2025quasilinearization}, facilitating similar decompositions for smooth nonlinear transformations of tensors with mixed-holder regularity. \\

The second tool is the questionnaire method originally described in \cite{coifman2011harmonic,gavish2012sampling,ankenman2014geometry} for organizing tensors. The questionnaire method relies on a key insight: understanding coordinates of each axis of a tensor as graphs to be organized with respect to the geometry of the other graphs induced by the axes of a tensor leads to a coherent tensor organization. Crucially, such a tensor organization allows for the data to be smooth with respect to the underlying geometry (the tensor product of graphs defined with respect to each axis in this setting), permitting one to consider further tensor inference questions such as sparsity and approximation. Indeed, in \cite{mishne2016hierarchical,georgiou2024clutter,sroczynski2025disorganized} the questionnaire method was utilized for inferring the spatio-temporal structure of neuronal data, enabling dynamic equation discovery governing chemical and bioengineering processes, and, general PDE discovery, respectively. As these two instruments are key to our empirical findings, we expound on their formulations in the subsequent subsections.

\subsection{Tensor Paraproducts}

We retain the notation used in \cite{fasina2025quasilinearization} to describe tensor paraproduct decompositions in our setting. First we begin with a few definitions describing Holder and mixed Holder regularity, a key regularity assumption required for the decomposition to hold.

\begin{definition} A function, $f: x \in [0,1] \to \mathbb{R}$ is $\alpha$-Holder with respect to the distance metric $d(x,y)$ satisfies the following the condition:

\begin{align}
& \frac{|f(x)-f(y)|}{|d(x,y)|^{\alpha}} \leq C \hspace{0.2 cm} \forall x,y \in \mathbb{R} && 
\end{align}
where $C,\alpha > 0$. We denote the set comprised of such functions as $\Lambda^{\alpha}([0,1])$
\end{definition}

\begin{definition} A function $f:(x,x') \in [0,1]^2 \to \mathbb{R}$ which is mixed $\alpha$-Holder with respect to the distance metrics $d(x,y),d'(x',y')$ satisfies the following conditions.

\begin{align}
& \frac{|f(x,x')-f(y,x')|}{|d(x,y)|^{\alpha}} \leq C, \frac{|f(x,x')-f(x,y')|}{ |d'(x',y')|^{\alpha}} \leq C, \nonumber \\
& \frac{|f(y,y') - f(x,y') - f(y,x') + f(x,x')|}{|d(x,y)|^{\alpha}| d'(x',y')|^{\alpha}} \leq C \hspace{0.1 cm} \forall x,x' \hspace{0.1 cm} \in [0,1]^2,
\end{align}

with $C, \alpha > 0$.  We denote this space as $\text{tensor-}\Lambda^{\alpha}([0,1]^2)$, but we refer to the space of such functions as $\Lambda^{\alpha}([0,1]^2)$ in the rest of the paper, for brevity.
\end{definition}

As described in \cite{gavish2010multiscale,meyer1990ondelettes}, Holder regularity and wavelets are intimately related as wavelet coefficients can be used to characterize Holder regularity and vice versa. Wavelets have a storied theoretical \cite{mallat1989theory} and applied \cite{coifman2021wavelets,gilbert2000multiresolution} history; this is due in no small part to their ease in computability (for certain wavelet families) and locality of in frequency and time. Here we consider Haar wavelet system though the tensor decomposition holds for any family of wavelets.

\begin{definition} (Wavelets and Tensor Wavelets for Haar System)
For $j,k \in \mathbb{N}$, the interval $I^j_k$ in direction $x$ is defined to be the set of points contained in the kth inteval at scale $j$. 

\begin{align}
I^j_k = \{x: x \in (2^{-j}k, 2^{-j}(k+1)] \}
\end{align}

We can then construct the scaling,  $\phi^j_k(x)$,  and wavelet functions, $\psi^j_k(x)$,  supported on the kth interval at scale j: 

\begin{align}
\phi^j_k(x)
\begin{cases}
    1 & x \in I^j_k \\
    0 & x \notin I^j_k
\end{cases}
\end{align}
\begin{align}
\psi^j_k(x)
\begin{cases}
    1 & x \in I^j_{k+} \\
    -1 & x \notin I^j_{k-}
\end{cases}
\end{align}

where $I^j_{k-} = \{ x :  x \in  ( \inf_I (I^j_k) ,  \inf_I (I^j_k) + \frac{(\inf_I (I^j_k) + \sup_I  (I^j_k)  )}{2} ]  \} $ and $I^j_{k+} = \{ x :  x \in  (\frac{(\inf_I (I^j_k) + \sup_I  (I^j_k)  )}{2} , \sup_I (I^j_k) ] \} $. The scaling and wavelet functions in the $x'$ direction are defined the same modulo a change of notation:

\begin{align}
I^{j'}_k = \{x': x' \in (2^{-{j'}}k, 2^{-{j'}}(k+1)] \}
\end{align}

\begin{align}
\phi^{j'}_k(x')
\begin{cases}
    1 & x' \in I^{j'}_k \\
    0 & x' \notin I^{j'}_k
\end{cases}
\end{align}
\begin{align}
\psi^{j'}_k(x')
\begin{cases}
    1 & x' \in I^{j'}_{k+} \\
    -1 & x' \notin I^{j'}_{k-}
\end{cases}
\end{align}

where $I^{j'}_{k+},I^{j'}_{k-}$ are defined in the same sense as $I^j_{k+},I^j_{k-}$. Finally we define the tensor scaling and tensor wavelet functions for $x,x' \in [0,1] \times [0,1]$

\begin{align}
\phi^{j,j'}_{k,k'}(x,x') =  \phi^j_k(x) \otimes \phi^{j'}_k(x')
\end{align}

\begin{align}
\psi^{j,j'}_{k,k'}(x,x') =  \psi^j_k(x) \otimes \psi^{j'}_k(x')
\end{align}

\end{definition}

\begin{remark}
The wavelet and tensor wavelet functions are defined for the sake of exposition but the decomposition is formulated and can be explicitly computed only with the tensor scaling function.
\end{remark}

\begin{theorem}

Suppose  $A \in C^2$, $ f \in \Lambda^\alpha([0,1]^2), 0 < \alpha < \frac{1}{2}$, then we obtain the following discrete multiscale tensor decomposition of $A(f)$:

\begin{align}
P^{j}P'^{j'}(f) = \sum_{k=1}^{2^j}  \sum_{k'=1'}^{2^{j'}} P^{j}_k P'^{j'}_{k'}(f) =  \sum_{k=1}^{2^j}  \sum_{k'=1'}^{2^{j'}} \big( \frac{1}{|R^{j,j'}_{k,k'}|}\int_{|R^{j,j'}_{k,k'}|} f(y,y') \phi^{j,j'}_{k,k'}(y,y') dy dy' \big) \chi_{R^{j,j'}_{k,k'}}(x,x')
\end{align}

\begin{align}
&  A(f) =  \sum_{j'=0'}^{N'} \sum_{j=0}^{N}  A'(P^jP'^{j'}(f)) [P^{j+1}P'^{j'+1}(f) - P^jP'^{j'+1}(f) - P^{j+1}P'^{j'}(f) + P^jP'^{j'}(f)] \nonumber \\
& + A''(P^jP'^{j'}(f))[P^{j+1}P'^{j'}(f) - P^jP'^{j'}(f)] [P^jP'^{j'+1}(f) - P^jP'^{j'}(f)]  + \Delta_{N,N'}(A,f)
\end{align}

\begin{align}
A(f) = \tilde{A}(f) + \Delta_{N,N'}(A,f)
\end{align}

\begin{align}
\Delta_{N,N'}(A,f) = A(f) - \tilde{A}(f) \in \Lambda^{2\alpha} ([0,1]^2)
\end{align}

where $\Delta_{N,N'}(A,f)$ is the residual between $A(f)$ and $\tilde {A}(f)$ up to scales $N,N'$. $P^jP'^{j'}$ is a convolution operators acting on $f$ with the tensor product of the scaling functions, $ \phi^{j,j'}_{k,k'}(x,x')$,  at scales $j,j'$ as its kernel. $R^{j,j'}_{k,k'} = I^j_k \times I^{j'}_{k'}$ is a dyadic rectangle constructed from taking the product of dyadic intervals $I^j_k, I^{j'}_{k'}$ at scales $j, j'$ such that $|R^{j,j'}_{k,k'} | = 2^{-(j+j')}$

\end{theorem}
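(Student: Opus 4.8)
The plan is to read off the decomposition from a two-dimensional telescoping identity combined with a second-order Taylor expansion of $A$, in direct analogy with Bony's one-dimensional paraproduct construction but carried out on the mixed scale lattice indexed by $(j,j')$. Abbreviating $g_{j,j'} = P^jP'^{j'}(f)$, the natural starting point is the mixed second difference of $A(g_{j,j'})$ over a single lattice cell,
\begin{align}
B_{j,j'} := A(g_{j+1,j'+1}) - A(g_{j,j'+1}) - A(g_{j+1,j'}) + A(g_{j,j'}),
\end{align}
whose double sum telescopes to
\begin{align}
\sum_{j=0}^{N}\sum_{j'=0}^{N'} B_{j,j'} = A(g_{N+1,N'+1}) - A(g_{0,N'+1}) - A(g_{N+1,0}) + A(g_{0,0}).
\end{align}
Identifying $A(g_{N+1,N'+1})$ with $A(f)$ up to the projection error at the finest retained scale, this isolates $A(f)$ as $\sum_{j,j'} B_{j,j'}$ plus lower-dimensional boundary terms that I will fold into $\Delta_{N,N'}(A,f)$.

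First I would expand each cell quantity $B_{j,j'}$. Writing $a = g_{j,j'}$ together with the three increments $p = P^{j+1}P'^{j'}(f) - P^jP'^{j'}(f)$, $q = P^jP'^{j'+1}(f) - P^jP'^{j'}(f)$, and $r = P^{j+1}P'^{j'+1}(f) - P^jP'^{j'+1}(f) - P^{j+1}P'^{j'}(f) + P^jP'^{j'}(f)$, the four corners are $a$, $a+p$, $a+q$, $a+p+q+r$. Since $A \in C^2$, Taylor expanding each term about $a$ and using that the mixed difference annihilates the constant term and the pure powers of $p$ and of $q$, one finds
\begin{align}
B_{j,j'} = A'(a)\,r + A''(a)\,pq + A''(a)\bigl(pr + qr + \tfrac12 r^2\bigr) + \mathcal{R}_{j,j'},
\end{align}
where $\mathcal{R}_{j,j'}$ gathers the Taylor remainders controlled by the modulus of continuity of $A''$. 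The first two summands are exactly the terms of $\tilde A(f)$ in the statement, so the content of the theorem is that the remaining quadratic-and-higher pieces assemble into a function in $\Lambda^{2\alpha}$.

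The requisite size estimates follow directly from the mixed $\alpha$-Hölder hypothesis on $f$: averaging the scaling-function convolutions against the three Hölder seminorms yields $\|p\|_\infty \le C\,2^{-\alpha j}$, $\|q\|_\infty \le C\,2^{-\alpha j'}$, and, crucially using the genuinely mixed seminorm, $\|r\|_\infty \le C\,2^{-\alpha(j+j')}$. Thus the two retained terms are each of order $2^{-\alpha(j+j')}$, carrying precisely the roughness of $f$, while every discarded piece gains at least one extra increment factor and so decays at least like $2^{-2\alpha j}2^{-\alpha j'}$, $2^{-\alpha j}2^{-2\alpha j'}$, or $2^{-2\alpha(j+j')}$.

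The main obstacle is upgrading this improved per-cell decay into the regularity statement $\Delta_{N,N'}(A,f) \in \Lambda^{2\alpha}([0,1]^2)$. For this I would invoke the tensor-Haar characterization of mixed Hölder classes recalled above — membership in $\Lambda^{2\alpha}$ is equivalent to decay of the mixed multiscale coefficients like $2^{-2\alpha(j+j')}$ — and test the resummed remainder against it. The genuinely delicate point is the anisotropic cross terms $A''(a)pr$ and $A''(a)qr$, which at face value are only $\Lambda^{\alpha}$-regular in one variable; controlling them requires a resummation in $(j,j')$ and the book-keeping estimates of the quasilinearization framework of \cite{fasina2025quasilinearization}, rather than a term-by-term bound. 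The hypothesis $0 < \alpha < \tfrac12$ enters twice: it guarantees $2\alpha < 1$ so that $\Lambda^{2\alpha}$ remains a genuine, non-differentiable Hölder class matching the coefficient characterization, and it ensures the geometric double series over $(j,j')$ generated by the quadratic terms converge. Collecting the retained terms as $\tilde A(f)$ and the remainder plus boundary contributions as $\Delta_{N,N'}(A,f)$ then yields the stated identity.
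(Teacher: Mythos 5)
You should know at the outset that the paper contains no proof of this theorem at all: it is imported wholesale from \cite{fasina2025quasilinearization}, so the only meaningful comparison is with that cited construction. Your skeleton does match its route in outline, and the parts you execute are correct: the double telescoping of the mixed second difference $B_{j,j'}$, the second-order Taylor expansion about $a = g_{j,j'}$ producing exactly the retained terms $A'(a)r + A''(a)pq$ plus the discarded pieces $A''(a)(pr+qr+\tfrac12 r^2)$, and the increment bounds $\|p\|_\infty \le C2^{-\alpha j}$, $\|q\|_\infty \le C2^{-\alpha j'}$, $\|r\|_\infty \le C2^{-\alpha(j+j')}$ drawn from the three mixed-H\"older seminorms of Definition~2, as well as the two roles you assign to $\alpha < \tfrac12$.

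There is nonetheless a genuine gap, and it sits at the only nontrivial assertion of the theorem, namely $\Delta_{N,N'}(A,f)\in\Lambda^{2\alpha}([0,1]^2)$. At precisely that step you defer to ``the book-keeping estimates of the quasilinearization framework of \cite{fasina2025quasilinearization}'' --- in a blind proof this is circular, since that is the very result being established. Nor is the estimate routine: for the cross term $A''(a)\,p\,r$ the per-cell bound is $C2^{-2\alpha j-\alpha j'}$, and resumming over $j\ge m$, $j'\ge m'$ yields only $C2^{-2\alpha m-\alpha m'}$, so tested against Definition~2 at exponent $2\alpha$ the single-variable condition in the $x'$ axis fails under any term-by-term or naive resummation bound; recovering $2\alpha$ decay in the coarse direction requires exploiting structure you never invoke, namely that $r$ is a mean-zero (martingale-difference) detail at scale $j'$ in $x'$ multiplied by the $P'^{j'}$-measurable factor $p$. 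A second omission concerns the boundary of the telescoping: the terms $A(g_{0,N'+1})$, $A(g_{N+1,0})$, $A(g_{0,0})$ are compositions of $A$ with single-variable $\alpha$-H\"older functions, hence only $\Lambda^{\alpha}$ in the surviving variable, and so cannot simply be ``folded into'' a remainder that Definition~2 requires to be $2\alpha$-H\"older in each variable separately; they need their own one-dimensional quasilinearization in the style of Meyer's classical decomposition, producing single-sum paraproduct terms absent from your accounting (and, in fairness, suppressed in the paper's statement as well). In short: your Taylor algebra and increment estimates are sound, but the regularity upgrade that constitutes the theorem's content is asserted by citation rather than proved.
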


\subsection{Tensor Organization}

\subsection{Questionnaire Algorithm}

The questionnaire algorithm organizes tensors in a multiscale sense by leveraging the hierarchal nature of partition trees. Informally, partition trees are a type of directed direct graph comprised of nodes with directed edges from parents to children. They can be binary where each parent has two children, or arbitrary - where no restriction is placed on the number of children a parent can have. In our setting, the nodes of the partition trees contain indices of a tensor axis (distinct from the sample associated with the index). As each axis has its own partition tree, in this situation we have three partition trees: a distinct tree for the row, column, and channel axes. We describe mathematically and algorithmically  the construction of such trees in the same sense as \cite{mishne2016hierarchical,ankenman2014geometry} in the context of transformer networks. \\

Let $\mathcal{H} = \{ 1,2, \ldots, n_H \}, \mathcal{Q} = \{1,2, \ldots, n_Q \}, \mathcal{K} = \{1,2, \ldots, n_K \}$, denote the set of indices for the head, query, and key axes. Define $\mathcal{T}_H, \mathcal{T}_Q, \mathcal{T}_K$, to be the partition trees defined on $\mathcal{H},\mathcal{Q},\mathcal{K}$, respectively which contain the nodes of the tree. Let $\mathcal{Q}_k^{l} \subset \mathcal{Q}$ be the kth node at level $l$ in the tree $\mathcal{T}_Q$ which contains a subset of the query indices. Analogously, we define $\mathcal{H}_k^l, \mathcal{K}_k^l$ - nodes at level $l$ and location $k$ in a tree. We use a type of EMD - proved to be equivalent to traditional formulations of EMD in \cite{leeb2015topics} and used  in \cite{ankenman2014geometry} - which computes affinities between 2D matrices based on where the coordinates of  the matrix lie in the cartesian product between the nodes of the trees defined on the set of indices of each axis. Suppose $ \mathbf{X} \in \mathbb{R}^{ |\mathcal{Q}| \times |\mathcal{K}| \times |\mathcal{H}|  }$ is the network 3-tensor we wish to process. Let $\mathbf{A}_p = \mathbf{X}[p,:,:], \mathbf{A}_j = \mathbf{X}[j,:,:]$. Then we define the EMD as the following:

\begin{align}
\text{EMD}_{\mathcal{T}_Q \times \mathcal{T}_K} (\mathbf{A}_p,\mathbf{A}_j) = \sum_{ \mathcal{Q}^l_k \in \mathcal{T}_\mathcal{Q}, \mathcal{K}^l_k \in \mathcal{T}_\mathcal{K}} m(\mathbf{A}_p - \mathbf{A}_j, \mathcal{Q}^l_k \times \mathcal{K}^l_k) w(\mathcal{Q}^l_k \times \mathcal{K}^l_k)
\end{align}

\begin{align}
m(\mathbf{A}_p,\mathcal{Q}^l_k \times \mathcal{K}^l_k) = \frac{1}{|\mathcal{Q}^l_k||\mathcal{K}^l_k|} \sum_{(q,k) \in  \mathcal{Q}^l_k \times \mathcal{K}^l_k } \mathbf{A}_p[q,k],  w(\mathcal{Q}^l_k \times \mathcal{K}^l_k) > 0
\end{align}

where $w(\mathcal{Q}^l_k \times \mathcal{K}^l_k) > 0$ is a weight value depending on the level and and size of the nodes in the trees and $(q,k)$ are the query and key indices contained in the cartesian product of a particualr set of nodes in the tree. Of course, the EMD with respect to the other axes are identical modulo a permutation of the partition trees incorporated into the computation. We summarize the questionnaire algorithm below:

\begin{algorithm}[H]
\label{alg:3dquest}
\caption{3D Questionnaire}
\begin{algorithmic}[1]
\Procedure{Bottom up flexible tree}{$\Psi$}
    \Require $\Psi = \{ \lambda_i \Psi_i(x) \}_{i=1}^n$ , an eigendecomposition of the Markov matrix
    \Ensure $\mathcal{T} = \{ 1,2, \ldots, n(\mathcal{T})\}$ where $n(\mathcal{T})$ is the number of nodes in partition tree
\EndProcedure 
\Procedure{Initialize query, key partition trees}{$\mathbf{X}$}
    \Require$ \mathbf{X} \in \mathbb{R}^{ |\mathcal{Q}| \times |\mathcal{K}| \times |\mathcal{H}|  }$
    \Ensure $\mathcal{T}^i_\mathcal{Q}$, $\mathcal{T}^i_\mathcal{K}$
    \State $\mathbf{X}_Q \in \mathbb{R}^{|\mathcal{Q}| \times |\mathcal{K}||\mathcal{H}|}$
    \State $\mathbf{G}_{\mathcal{Q}}[i,j] \gets \frac{<\mathbf{X}_Q^T(:,i),\mathbf{X}_Q^T(:,j)>}{  \lVert \mathbf{X}_Q^T(:,i) \rVert \lVert \mathbf{X}_Q^T(:,j) \rVert}$ $\forall i,j \in \{1,2, \ldots, |Q|\}$
    \State $\mathbf{D}[i,j] \gets \sum_m \mathbf{G}_{\mathcal{Q}}[i,m] \text{ for } i = j ,  \mathbf{D}[i,j] \gets 0 \text{ for } i \neq j$
    \State $\mathbf{M}_\mathcal{Q} \gets \mathbf{D}^{-\frac{1}{2}}\mathbf{D}^{-\frac{1}{2}} \mathbf{G}_{\mathcal{Q}} \mathbf{D}^{-\frac{1}{2}}\mathbf{D}^{\frac{1}{2}}, \phi : \mathbf{M}_\mathcal{Q} \to \{ \lambda_i \psi^{Q}_i(x) \}_{i=1}^n  = \Psi_Q$
    \State Repeat steps $4$ to $7$ for $\mathbf{X}_K \in \mathbb{R}^{|\mathcal{K}| \times |\mathcal{Q}||\mathcal{H}|}$ to obtain $\Psi_\mathcal{K}$
    \State $\mathcal{T}^i_\mathcal{Q} \gets $ \Call{Bottom up flexible tree}{$\Psi_{\mathcal{Q}}$}, $\mathcal{T}^i_\mathcal{K} \gets $ \Call{Bottom up flexible tree}{$\Psi_{\mathcal{K}}$}
    \State \Return $\mathcal{T}^{i}_\mathcal{Q}, \mathcal{T}^{i}_\mathcal{K}$
\EndProcedure
\Procedure{Tensor Organization}{$\mathcal{T}^i_\mathcal{Q}$,$\mathcal{T}_\mathcal{K}^i$ }
    \Require$ \mathcal{T}^i_\mathcal{Q},\mathcal{T}^i_\mathcal{K}, \mathbf{X} \in \mathbb{R}^{ |\mathcal{Q}| \times |\mathcal{K}| \times |\mathcal{H}|  }$
    \Ensure $\mathcal{T}^o_H, \mathcal{T}^o_Q, \mathcal{T}^o_K , \mathbf{G}_H, \mathbf{G}_K, \mathbf{G}_Q$
    \State $\mathcal{T}^i_\mathcal{Q}, \mathcal{T}^i_\mathcal{K} \gets$ \Call{Initialize query, key, partition trees}{$\mathbf{X}$}
     \For{$m = 1 \ldots$ $N$}
         \State $\mathbf{A}_p \gets \mathbf{X}[:,:,p],\mathbf{A}_j \gets \mathbf{X}[:,:,j]$, $\text{EMD}_{\mathcal{T}^i_\mathcal{Q} \times \mathcal{T}^i_\mathcal{K} }[\mathbf{A}_p,\mathbf{A}_j] \forall j,p \in \mathcal{H}$
         \State $\mathbf{E}_H \gets \text{EMD}_{\mathcal{T}^i_\mathcal{Q} \times \mathcal{T}^i_\mathcal{K} }$, $\mathbf{G}_H = e^{-\frac{\mathbf{E}_H}{\epsilon}}, \epsilon = \text{median}(\mathbf{E}_H) $, $\phi: \mathbf{G}_H \to \mathbf{M}_H \to \{ \lambda_i \psi_i(x) \}$
         \State $\mathcal{T}_H  \gets $ \Call{Bottom up flexible tree}{$\Psi_H$}
         \State Repeat steps 15-17 using key and head partition trees to generate query partition tree
         \State Repeat steps 15-17 using query and head partition trees to generate key partition tree
     \EndFor
    \State \Return $\mathcal{T}^o_H, \mathcal{T}^o_Q, \mathcal{T}^o_K , \mathbf{G}_H, \mathbf{G}_K, \mathbf{G}_Q$
\EndProcedure
\end{algorithmic}
\end{algorithm}

For further algorithmic details we refer the reader to \cite{ankenman2014geometry}, for more mathematical details, we refer the reader to \cite{gavish2012sampling}, and for a different but similar exposition in 3D, \cite{mishne2016hierarchical}. The questionnaire algorithm outputs the final affinities and partition trees along each axis which are useful for downstream analysis.

\subsection{Tri-Haar Basis and Sparsity}

Each of the nodes contained in the output trees, $\mathcal{T}^o_H,\mathcal{T}^o_Q,\mathcal{T}^o_K$, from \ref{alg:3dquest} contain the query, key, and head indices in each of the nodes. Consequentially, one has $f : \mathcal{T}^o_H \times \mathcal{T}^o_Q \times \mathcal{T}^o_K \to \mathbb{R} \in  \mathbf{X}, \mathbf{X} \in \mathbb{R}^{ |\mathcal{Q}| \times |\mathcal{K}| \times |\mathcal{H}| }$, meaning functions supported on the new geometry obtained from the questionnaire organization are just the network 3-tensor elements.  It follows that construction of a suitable basis to compress $f$ is natural and was first described in \cite{gavish2010multiscale,coifman2011harmonic}. We provide a brief description of a tri-haar basis on the obtained geometry following the notation of \cite{gavish2010multiscale,coifman2011harmonic}. \\

Define $\psi^{\mathcal{H}}_{l_0,k_0,j_0}(x),\psi^{\mathcal{Q}}_{l_1,k_1,j_1}(y),\psi^{\mathcal{K}}_{l_2,k_2,j_2}(z)$ to be the Haar wavelets associated with the nodes $\mathcal{H}_k^l\subset \mathcal{H},\mathcal{Q}_k^l \subset \mathcal{Q},\mathcal{K}_k^l \subset \mathcal{K}$ which contain subsets of head, query, and key indices, at node $k$ and level $l$ in the tree, where $j$ is the wavelet function associated with a particular node. Note $l_0 = 0_0, 0_1, \ldots, |n(l_0)| $ where $n(l_0)$ is the number of levels in the partition tree defined on the space of heads. The superscripts $k_0,j_0$ for the node location and the number of associated wavelet functions are indexed similarly, and a similar notation holds for $\{ l_1, k_1, j_1, l_2, k_2,j_2\}$, the superscripts for the query and key trees. Then one defines a tri-haar basis  $\psi^{\mathcal{H}}_{l_0,k_0,j_0}(x) \otimes \psi^{\mathcal{Q}}_{l_1,k_1,j_1}(y) \otimes \psi^{\mathcal{K}}_{l_2,k_2,j_2}(z)$ and its associated expansion coefficients: $ \bigcup_{i=0}^2 \alpha_{\{ l_i,k_i,j_i \}} = <f(x,y,z) (\psi^{\mathcal{H}}_{l_0,k_0,j_0}(x) \otimes \psi^{\mathcal{Q}}_{l_1,k_1,j_1}(y) \otimes \psi^{\mathcal{K}}_{l_2,k_2,j_2}(z) )>$. As in the Euclidean setting, such expansion coefficients can be used to measure function smoothness via the $l_p$ entropy of the expansion coefficients as described in \cite{coifman2011harmonic}. Of note, one can define 2D tensor bases with respect to the different basis vectors; in fact, the tensor basis comprised of the query and key bases will a central object in our analyses.

\section{Softmax Invariance and Sparsity}

We probe the intrinsic and extrinsic structure of the self-attention mechanism of two well-known transformer language and vision networks, respectively: Transformer-XL \cite{dai2019transformer} and Vit B16 \cite{dosovitskiy2020image}. For both networks, we provide numerical experiments to support our theoretical finding that self-attention is invariant to softmax activation under appropriate assumptions in section 4.1. We then use the questionnaire algorithm described in \ref{alg:3dquest} to organize the attention heads for both networks, permitting downstream network analysis. First, qualitative results of network organization are provided by a visualization of the attention heads in both networks using the diffusion maps \cite{coifman2006diffusion} algorithm, along with the resulting partition tree and adjacency matrix between the attention heads. Subsequently, we compute the expansion coefficients with respect to the query-key tensor basis and furthermore the $l_1$ entropy of each attention head of the networks to understand the network sparsity.\\

\subsection{Softmax Invariance}

\begin{theorem}

Let $f \in \mathbb{R}^{N \times N}$ be an attention head in a network and $\tilde{f} \in \mathbb{R}^{N \times N} \subset \Lambda^{\alpha}([0,1]^2)$ be a mixed $\alpha$-Holder approximation of $f$. Suppose $A_k(\tilde{f}_{i}) = \frac{e^{  \tilde{f}_{i,k}}}{\sum_j  e^{ \tilde{f}_{i,j}}}$. where $k,j$ are the column indices while $i$ is a row index such that $A_k(\tilde{f}_i)$ operates pointwise on the $i$th row and $k$th column of $\tilde{f} $ (i.e. the softmax activation function applied to $\tilde{f}$). Then the softmax activation on a mixed $\alpha$-Holder attention head can be decomposed into:

\begin{align}
&  A(\tilde{f}) =  A'(P^jP'^{j'}(f))[P^{j+1}P'^{j'+1}(f) - P^jP'^{j'+1}(f) - P^{j+1}P'^{j'}(f) + P^jP'^{j'}(f)] \nonumber \\
& +A''(P^jP'^{j'}(f))[P^{j+1}P'^{j'}(f) - P^jP'^{j'}(f)] [P^jP'^{j'+1}(f) - P^jP'^{j'}(f)]  + \Delta_{N,N'}(A,f) 
\end{align}
\end{theorem}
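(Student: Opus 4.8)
The plan is to recognize that the stated identity is a direct specialization of the general paraproduct decomposition (Theorem 1) to the case $A = \mathrm{softmax}$, so that the bulk of the work is verifying that the softmax map satisfies the two hypotheses of that theorem: that $A \in C^2$ with bounded derivatives, and that its argument lies in $\Lambda^{\alpha}([0,1]^2)$ with $0 < \alpha < \tfrac{1}{2}$. The second hypothesis is granted by assumption, since $\tilde{f}$ is taken to be a mixed $\alpha$-Holder approximation of the head $f$. Hence the genuine content is the first hypothesis, together with the bookkeeping that lets the scalar statement of Theorem 1 carry over to the softmax.

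First I would establish smoothness. For a fixed row $i$, write $A_k(\tilde{f}_i) = e^{\tilde{f}_{i,k}}/Z_i$ with $Z_i = \sum_j e^{\tilde{f}_{i,j}} > 0$. Since $Z_i$ is bounded away from zero, $A$ is a composition of $C^\infty$ maps (the exponential and division by a strictly positive quantity), hence $C^\infty$, and in particular $C^2$. I would then record the standard derivatives $\partial_l A_k = A_k(\delta_{kl} - A_l)$ together with the corresponding Hessian entries, and observe that all of these are bounded in absolute value because each $A_k \in [0,1]$ and $\sum_k A_k = 1$. This uniform boundedness of $A'$ and $A''$ is precisely the quantitative input needed for the residual control $\Delta_{N,N'}(A,f) \in \Lambda^{2\alpha}([0,1]^2)$ supplied by Theorem 1.

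With both hypotheses in hand, I would invoke Theorem 1 directly: substituting softmax for the generic $C^2$ function yields the claimed identity, where $A'(P^jP'^{j'}(f))$ and $A''(P^jP'^{j'}(f))$ denote the softmax Jacobian and Hessian evaluated at the coarse-scale average $P^jP'^{j'}(f)$ and contracted against the mixed and single increments of $f$, exactly as in the general formula. The first-order term is linear in the mixed second difference of $f$ across scales, the second term is the bilinear Hessian correction, and everything beyond these is collected into $\Delta_{N,N'}$, which inherits the doubled regularity $\Lambda^{2\alpha}$ from the general residual estimate. Reading this off gives the desired invariance statement: on an intrinsically organized (regular) head, softmax agrees with its linearization $\tilde{A}$ up to a strictly smoother remainder.

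The main obstacle is that softmax, unlike the scalar $A$ in Theorem 1, is not applied pointwise: the normalization $Z_i$ couples all columns within a row, so $A$ is genuinely a vector-valued map on each row rather than a function $\mathbb{R}\to\mathbb{R}$. I would resolve this either by reading Theorem 1 in its multivariate form, with $A'$ and $A''$ interpreted as the bounded softmax Jacobian and Hessian, or, to make the regularity bookkeeping transparent, by factoring $A = g \circ \exp$ where $\exp$ is an honest $C^\infty$ pointwise map and $g(u)_k = u_k / \sum_j u_j$ is the normalization. In the latter view the only coupling sits in the denominator $Z_i = \sum_j e^{\tilde{f}_{i,j}}$, which is constant in the column variable and inherits $\Lambda^{\alpha}$ regularity in the row variable (being a finite sum of compositions of $\exp$ with $\Lambda^{\alpha}$ functions, bounded below). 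The care needed is to confirm that this coupled normalization does not disrupt the scale-by-scale telescoping behind the $\Lambda^{2\alpha}$ residual estimate; once this is checked, the general bound transfers and the stated decomposition follows.
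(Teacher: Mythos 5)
Your proposal takes essentially the same route as the paper: the paper's entire proof is a one-line appeal to the quasilinearization result of \cite{fasina2025quasilinearization} (i.e., its own Theorem~1 specialized to $A = \mathrm{softmax}$), and you verify exactly those two hypotheses ($A \in C^2$ with bounded derivatives, $\tilde{f} \in \Lambda^{\alpha}([0,1]^2)$) before invoking the same decomposition. Your additional care about the row-wise normalization $Z_i$ coupling the columns --- so that softmax is not an honest pointwise scalar map of the kind Theorem~1 nominally treats --- is a genuine subtlety that the paper's citation-only proof passes over silently, and your Jacobian/Hessian boundedness checks supply detail the paper omits entirely.
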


\begin{proof}
This is proved directly by appealing to \cite{fasina2025quasilinearization}.
\end{proof}

\begin{corollary}

If we assume that for a network $T:\{t_i\}_{i=1}^n \to \hat{v}_m$,  $L(v,\hat{v}_m) \to 0, \forall \hat{v}_m \in \mathcal{X}$, as $|[P^{j+1}P'^{j'+1}(f) - P^jP'^{j'+1}(f) - P^{j+1}P'^{j'}(f) + P^jP'^{j'}(f)]| \to \infty$ holds, then the softmax activation on a mixed $\alpha$-Holder attention head can be decomposed into:
\begin{align}
&  A(\tilde{f}) =  [P^{j+1}P'^{j'+1}(f) - P^jP'^{j'+1}(f) - P^{j+1}P'^{j'}(f) + P^jP'^{j'}(f)] \nonumber \\
& +[P^{j+1}P'^{j'}(f) - P^jP'^{j'}(f)] [P^jP'^{j'+1}(f) - P^jP'^{j'}(f)]  + \Delta_{N,N'}(A,f) 
\end{align}

where $L$ is a loss function,  $\{t_i\}_{i=1}^n $ is a set of  tokens input to the model , $\hat{v}_m$ is a prediction from $T$, $v$ is a ground truth, and $\mathcal{X}$ is a set containing possible predictions of  $T$ and the ground truth solution, and $P^{j}P'^{j'}(f)$ is the averaging operator defined in section 3. 

\end{corollary}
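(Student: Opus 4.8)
The plan is to treat the corollary as a limiting specialization of the preceding softmax-invariance theorem, so the first step is simply to invoke that decomposition with $A$ equal to the softmax and read off that the two bracketed tensors multiplying $A'(P^jP'^{j'}(f))$ and $A''(P^jP'^{j'}(f))$ are exactly the mixed second-order paraproduct term and the product of the two single-direction first-order increments. Writing $D$ for the bracketed mixed second-order difference $[P^{j+1}P'^{j'+1}(f) - P^jP'^{j'+1}(f) - P^{j+1}P'^{j'}(f) + P^jP'^{j'}(f)]$, the entire content of the corollary reduces to showing that, under the stated hypothesis, the scalar prefactors $A'(P^jP'^{j'}(f))$ and $A''(P^jP'^{j'}(f))$ may be replaced by $1$ without altering the identity beyond the residual $\Delta_{N,N'}(A,f)$. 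In other words, the claim is that the paraproduct \emph{structure} is invariant under softmax and that in the prescribed regime the two derivative coefficients collapse to unity.

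Next I would make the softmax derivatives explicit. Since $A_k(\tilde f_i) = e^{\tilde f_{i,k}}/\sum_j e^{\tilde f_{i,j}}$, its Jacobian has entries of the form $A_k(\delta_{kl} - A_l)$ and its second derivative a comparable quadratic-in-$A$ expression, and the crucial point is that in the theorem these are evaluated at the coarse average $P^jP'^{j'}(f)$ rather than at $f$ itself. The key step is then to connect the hypothesis---that the loss $L(v,\hat v_m)$ vanishes along the family of models for which $|D| \to \infty$---to the asymptotic value of these prefactors. Concretely, I would argue that driving $L\to 0$ forces the model onto the branch where the averaged pre-activation field $P^jP'^{j'}(f)$ stabilizes relative to the blowing-up increment $D$, so that $A'(P^jP'^{j'}(f)) \to 1$ and $A''(P^jP'^{j'}(f)) \to 1$ entrywise; substituting these limits into the theorem's decomposition produces exactly the displayed identity. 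The residual requires no separate argument: by the decomposition theorem $\Delta_{N,N'}(A,f) \in \Lambda^{2\alpha}([0,1]^2)$, so the error of the simplified expansion inherits the same $2\alpha$-Hölder control and no new terms enter.

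The hard part, and the step I expect to be the main obstacle, is making this middle step rigorous---namely justifying that the regime $|D|\to\infty$ together with $L\to 0$ actually pins the prefactors to $1$ rather than to the $0$ one would naively read off from softmax saturation. This demands being precise about the sense of the limit (entrywise on the $N\times N$ head, uniformly in the row index $i$, or in an averaged norm), about the order in which the multiscale indices $(j,j')$ and the blow-up of $D$ are taken, and about how the loss hypothesis constrains the coarse field $P^jP'^{j'}(f)$ so that its image under the softmax derivatives tends to unity. I would manage this by fixing the scales $(j,j')$ first, extracting the coefficient asymptotics from the explicit softmax-derivative formulas, and only then invoking the hypothesis to identify the limit; the mixed $\alpha$-Hölder bound guarantees that the first-order increments and their product remain finite and well defined throughout, so that replacing the two prefactors by $1$ is the sole substantive claim left to verify.
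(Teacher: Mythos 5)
Your reduction is the same as the paper's: invoke the decomposition theorem and argue that the two prefactors $A'(P^jP'^{j'}(f))$ and $A''(P^jP'^{j'}(f))$ collapse to $1$. But you stop exactly where the paper's proof does its (only) work, and you explicitly flag that step as an unresolved obstacle rather than proving it. The paper's argument is: the loss hypothesis is used only to justify restricting attention to heads $\tilde f_m$ whose mixed wavelet coefficients blow up, and then it is claimed that $|[P^{j+1}P'^{j'+1}(\tilde f_m) - P^jP'^{j'+1}(\tilde f_m) - P^{j+1}P'^{j'}(\tilde f_m) + P^jP'^{j'}(\tilde f_m)]| \to \infty$ forces the average $\mu(\tilde f_m) \to 0$, so that the coarse field vanishes and $A'(P^jP'^{j'}(\tilde f_m)) = A''(P^jP'^{j'}(\tilde f_m)) = e^{P^jP'^{j'}(\tilde f_m)} = e^0 = 1$. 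Note the mechanism: the paper identifies both derivative prefactors with the plain exponential $e^{P^jP'^{j'}(\tilde f_m)}$, i.e., it differentiates only the softmax numerator and discards the normalizing denominator. Your proposal contains neither half of this: you never derive that the coarse field tends to zero (you instead gesture at the loss hypothesis making the field ``stabilize,'' which is a different and unsubstantiated claim), and you never commit to the exponential identification that makes the value at zero equal $1$.

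Worse, the route you do sketch would fail. You correctly write the exact softmax Jacobian as $A_k(\delta_{kl} - A_l)$; evaluated at a vanishing (hence uniform) coarse field this gives entries of size $\frac{1}{N}\bigl(1 - \frac{1}{N}\bigr)$, which tend to $0$ as $N$ grows --- precisely the saturation-to-zero outcome you worry about in your final paragraph, and the opposite of the prefactor-equals-one conclusion the corollary needs. So ``fixing the scales, extracting the coefficient asymptotics from the explicit softmax-derivative formulas, and then invoking the hypothesis'' cannot pin the prefactors to $1$; the corollary's identity is only reachable via the paper's substitution $A'(\cdot) = A''(\cdot) = e^{(\cdot)}$ together with the claim $\mu(\tilde f_m) \to 0$. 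You have in effect identified a genuine tension in the paper's own proof (exact softmax derivatives do not behave as the proof asserts), but as a proof attempt your proposal has a concrete gap: the sole substantive step is missing, and the plan you offer for filling it points to a conclusion inconsistent with the statement being proved.
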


\begin{proof}
See Appendix
\end{proof}

\subsubsection{Network Descriptions}

The transformer-xl (TXL) model is comprised of 16 layers, each with a multi-head attention mechanism consisting of 8 heads. We train transformer-xl on the WikiText-103 \cite{merity2016pointer} dataset. Following training, we evaluate the network and 10 batches where each batch is comprised of 256 tokens. We then extract all the attention matrices for all 10 batches for downstream analysis. The VIT B16 model is comprised of 12 layers each with a multi-head attention mechanism comprised of 12 heads. The VIT model is trained on the CIFAR10 \cite{krizhevsky2009learning} dataset. Following training of the VIT model, we evaluate the network on 10 batches, where each batch is comprised of 197 tokens. Again, all the attention matrices for the 10 batches are used for downstream analysis. See the appendix for further experimental details.

\subsubsection{Softmax Invariance Computational Experiments}

\textbf{Nomenclature} We refer to a batch as a collection of tokens contained in a fixed context length. This terminology is used since we consider each token to be a sample.

For both the TXL and VIT models we considered 10 randomly selected batches of test data, which are projected onto the query and key matrices associated with each network, during inference. An attention matrix, $\mathbf{A}$, is constructed by projecting the data, $\mathbf{X}$ onto the query and key weight matrices, $\mathbf{W}_Q, \mathbf{W}_K$ : $\mathbf{A} = \text{softmax}(\frac{\mathbf{X} \mathbf{W}_Q \mathbf{W}_K \mathbf{X}^T}{\sqrt{D}})$ where $D$ is the number of tokens in each batch. The TXL network is comprised of 128 attention heads while the VIT network is comprised of 144. We can define the sets of attention matrices for each of the networks as:

\begin{align}
\mathcal{X}_{\text{TXL}} = \{ \{ \mathbf{A}^{(m)}_1 \}_{m=1}^{128}, \ldots, \{ \mathbf{A}^{(m)}_{10} \}_{m=1}^{128}  \}
\end{align}

\begin{align}
\mathcal{X}_{\text{VIT}} = \{ \{ \mathbf{A}^{(m)}_1 \}_{m=1}^{144}, \ldots, \{ \mathbf{A}^{(m)}_{10} \}_{m=1}^{144}  \}
\end{align}

where $\mathbf{A}_i^{(m)}$ is the attention head associated with the $m$th weight matrix for the $i$th batch. For the TXL network, $\mathbf{X} \in \mathbb{R}^{256 \times d}$ where $d$ is the model dimension while for the VIT network, $\mathbf{X} \in \mathbb{R}^{197 \times d}$ where $d$ is the model dimension for the VIT network. 256 and 197 correspond to the number of tokens in the TXL and VIT networks, respectively.\\

\begin{figure}[H]
    \centering
    \includegraphics[width=0.6\textwidth]{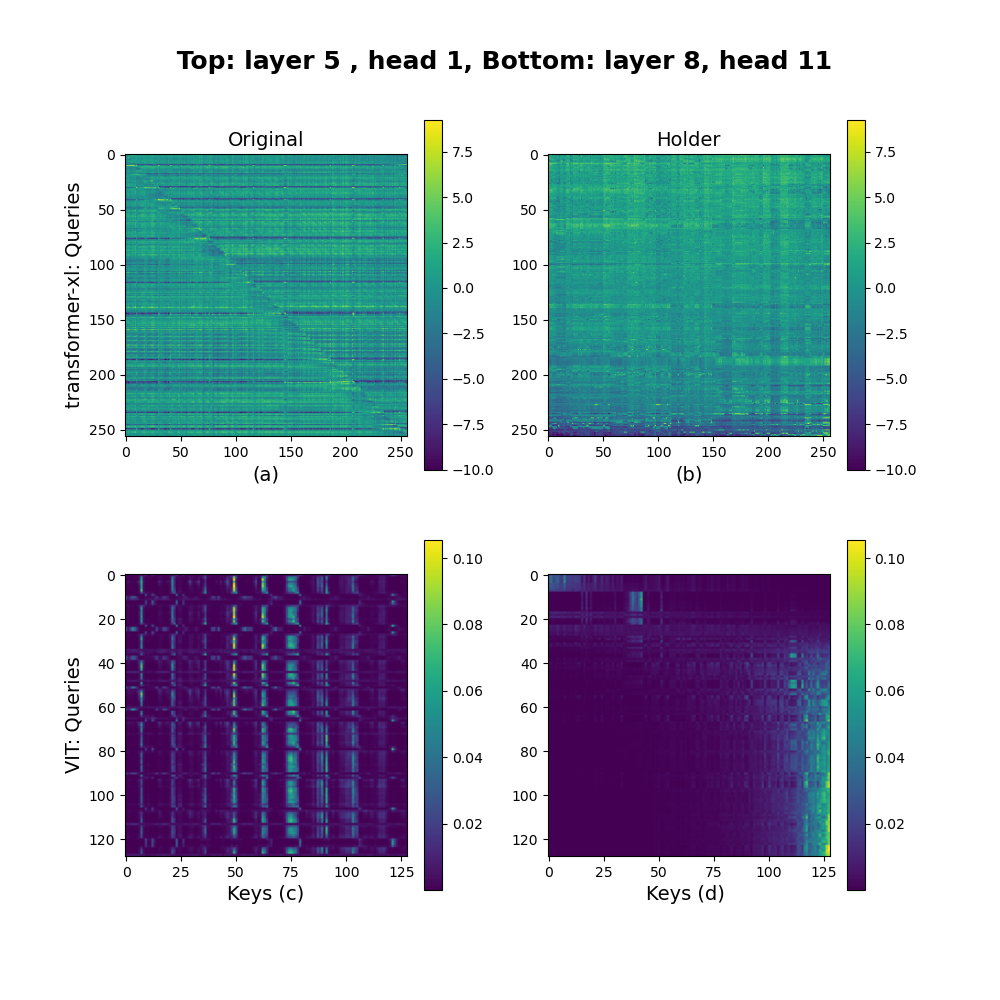} 
    \caption{Organization of two randomly selected attention heads (top is from TXL, bottom is from VIT) to have mixed $\alpha$-Holder regularity (a) original TXL attention head (b) organization of TXL attention head (c) original VIT attention head (d) organization of VIT attention head to be have mixed $\alpha$-Holder regularity}
    \label{fig:1}
\end{figure}

We randomly selected one of the 10 batches from both the WikiText-103 and CIFAR-10 datasets. On the left column is a visualization of the extracted attention heads for the TXL (top) and VIT (bottom) networks. On the right are the corresponding mixed $\alpha$-Holder organizations of the matrix. We remark that the attention head for the VIT network was cropped to have a dimension (128) on each axis, as having dimensions which are a power 2 are a necessary requirement for the implementation. The organization of a matrix to be mixed $\alpha$-holder is simply a 2D version of the 3D questionnaire algorithm described in the main text; further details can be found in the references associated with the 3D questionnaire algorithm. This organization is essential, as the decomposition theorem in the main text only holds for smooth nonlinear transformation of functions with mixed $\alpha$-Holder regularity. One can qualitatively see the smoothness in the right column in contrast to the left for both the TXL and VIT networks.

\begin{figure}[H]
    \centering
    \includegraphics[width=0.6\textwidth]{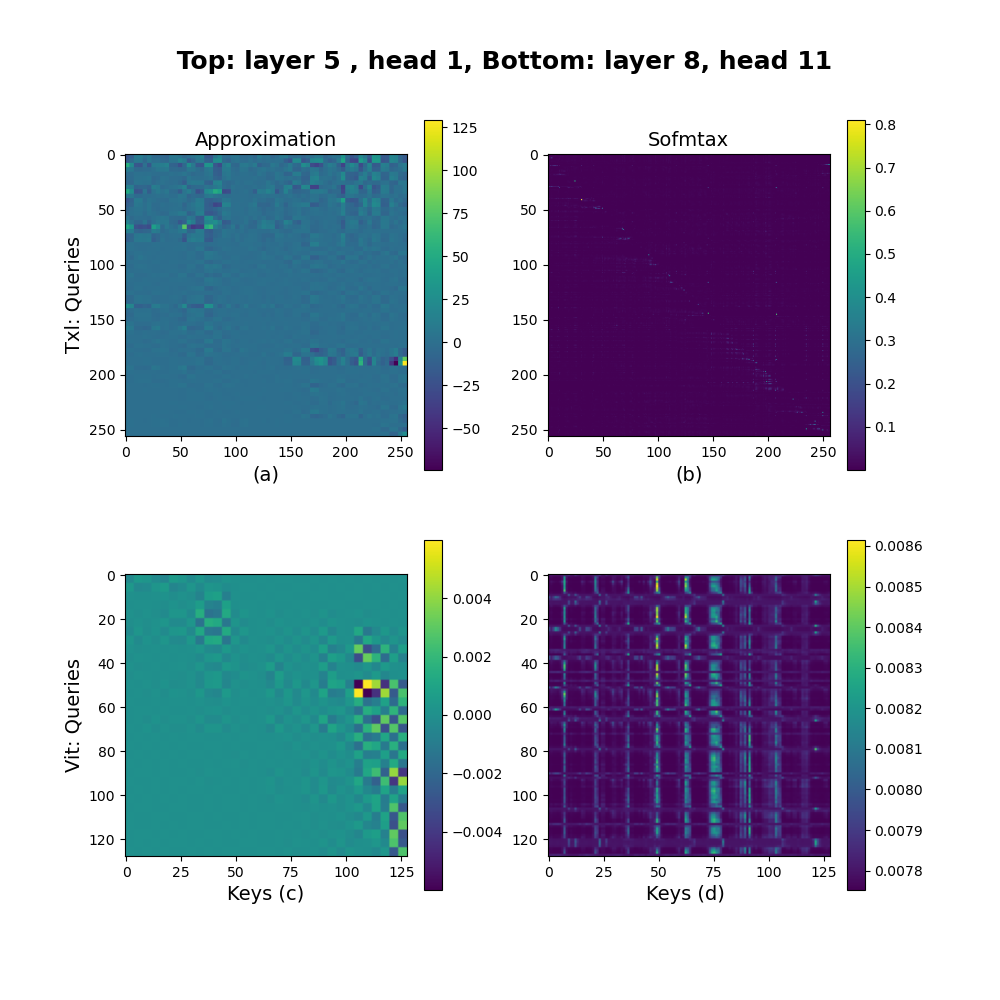} 
    \caption{Paraproduct decomposition and softmax attention mechanism applied to both attention heads in \ref{fig:1}. (a) approximation of TXL attention head with the paraproduct decomposition (b) Softmax of TXL attention head (c) approximation of VIT attention head with the paraproduct decomposition (d) softmax of VIT attention head}
    \label{fig:2}
\end{figure}

\vspace{-25mm}

We implement the decomposition theorem on the same attention heads considered in Figure 1. The left column is the approximation, the right column is the softmax activation function applied to the mixed $\alpha$-Holder attention heads. For the TXL attention, its readily seen the softmax activation function oversquashes certain coordiantes in the attention head, while the approximation preserves some of the structure. A similar phenomenon is seen for the VIT attention head, though less pronounced.

\vspace{-30mm}

\subsection{Network Sparsity}

We organized network 3-tensors of concatenated attention heads for both the TXL and VIT networks such that one can qualitatively and quantitatively assess the organization among all the heads in the network. For each batch comprised of 256 and 197 tokens for the WikiText-103 and CIFAR-10 datasets respectively, we deploy the questionnaire algorithm to organize the corresponding network 3-tensors: $ \{ \mathbf{A}_i^{(m)} \}_{m=1}^{128} \in \mathcal{X}_{\text{TXL}}, \{ \mathbf{A}_i^{(m)} \}_{m=1}^{144} \in \mathcal{X}_{\text{VIT}}$.

\begin{figure}[H]
    \centering
    \includegraphics[width=0.95\textwidth]{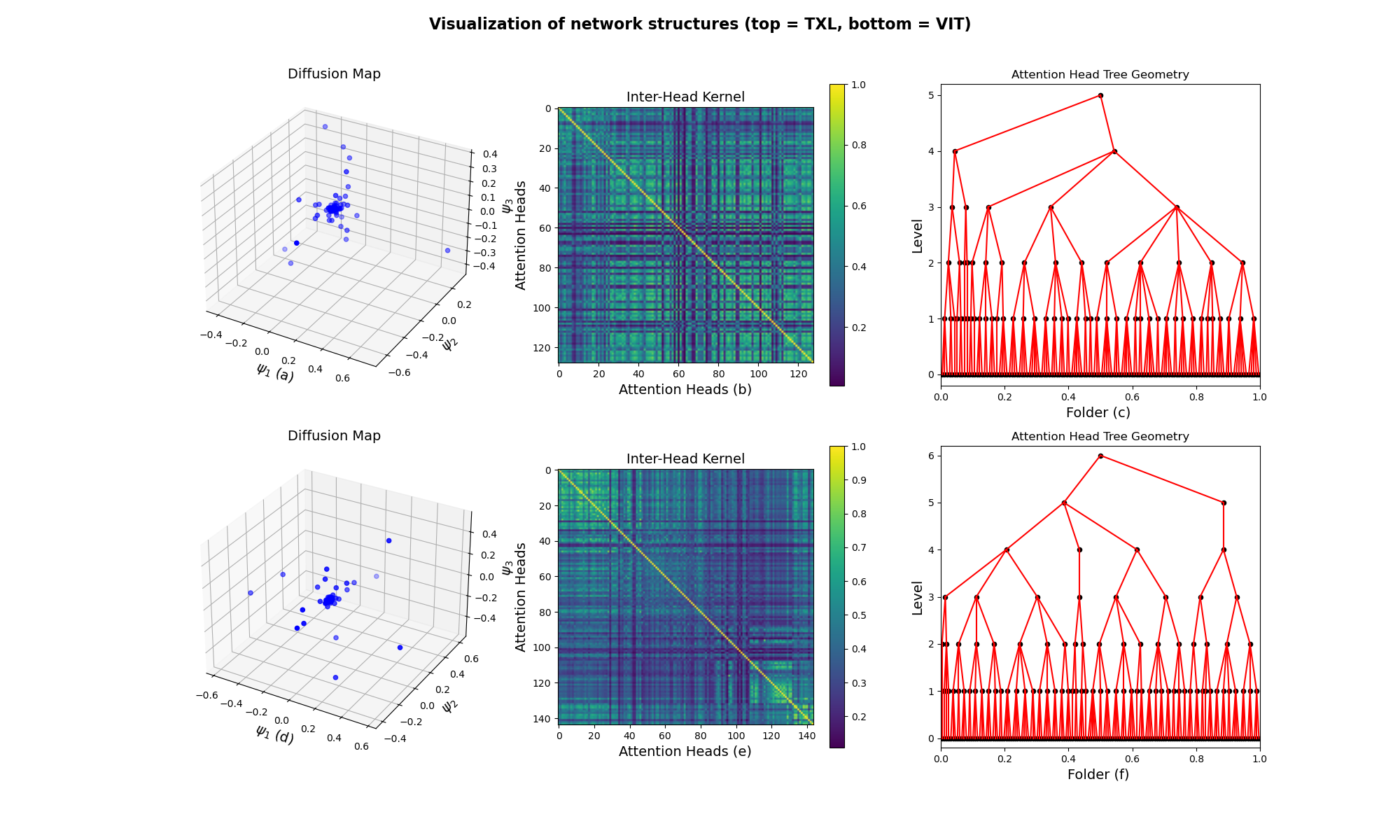} 
    \caption{Visualization of resulting network structure from diffusion maps algorithm. (a) Diffusion map embedding of TXL attention heads (b) Adjacency matrix of TXL attention heads (c) flexible tree comprised of TXL attention heads (d) Diffusion map embedding of VIT attention heads (e) Adjacency matrix of VIT attention heads (f) flexible tree comprised of VIT attention heads}
    \label{fig:3}
\end{figure}

\begin{figure}[H]
    \centering
    \includegraphics[width=0.95\textwidth]{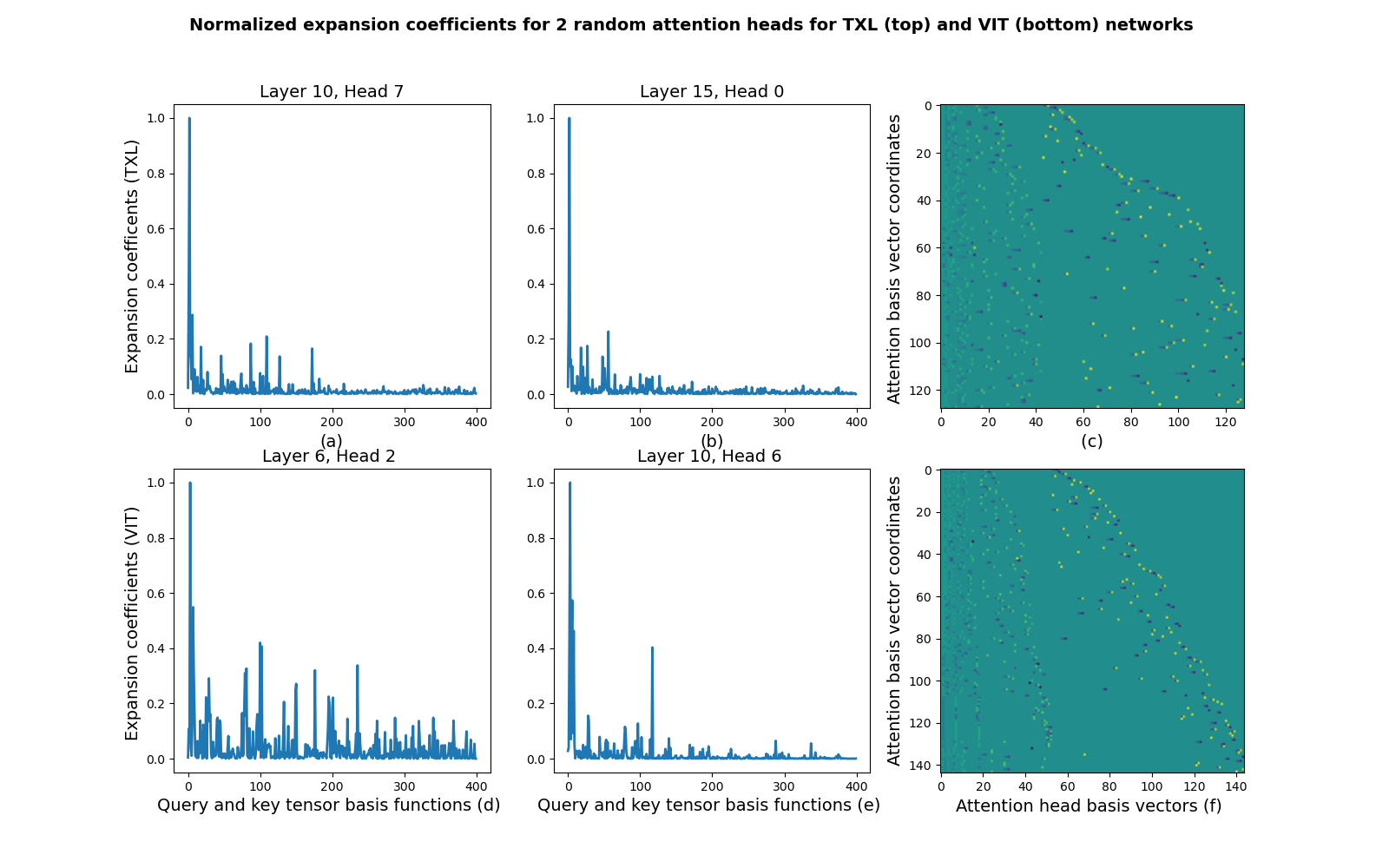} 
    \caption{Expansion coefficients of two random attention heads from both the TXL (top) and VIT (bottom) networks obtained from projecting an attention head into query-key tensor basis. For each network, plots of expansion coefficients are sorted in descending order from largest to smallest query-key tensor basis support size (a) Expansion coefficients for head 7 in layer 10 of the TXL network (b) Expansion coefficients for head 0, layer 15 in the TXL network (c) Haar basis vectors for the TXL attention head space (d)  Expansion coefficients for head 7 in layer 10 of the VIT network  (e)  Expansion coefficients for head 7 in layer 10 of the VIT network (f) Haar basis vectors for the VIT attention head space }
    \label{fig:4}
\end{figure}

The questionnaire algorithm outputs (a) a partition tree on the space of heads, where each tree contains a folder comprised of the head indices, (b) an affinity matrix comprised of pair-wise similarities between each of the attention heads in the network. We also subsequently built (c) a diffusion map using the affinity matrix of pair-wise relations between attention-heads. The top and bottom rows of Figure \ref{fig:3} are used to qualitatively assess the organization of the network for one batch of data for the TXL and VIT networks, respectively. The diffusion map on the TXL network exhibits some continuous structure, while the VIT network is more clustered and contains some outliers.  

In Figure  \ref{fig:4} we visualized the decay of the expansion coefficients by projecting each attention head on the the bi-haar basis of queries and keys. Let  $ \bigcup_{i=0}^1 \beta_{ \{l_i,k_i,j_i\}} = <f(x,y, \cdot),  \psi^{\mathcal{Q}}_{l_1,k_1,j_1}(x) \otimes \psi^{\mathcal{H}}_{l_2,k_2,j_2}(y)>$ be an expansion coefficient obtained from projecting into the bi-haar basis of queries and keys. The indices $(l,k,j)$ denote the level, location, and index of a basis function of a tree, respectively. the subscripts of $0$ and $1$ are used to denote the basis functions for the query and key binary trees, respectively.\\

The left and middle columns are plots of the expansion coefficients (sorted in descending order by the query and key bi-haar basis support size) obtained by  projecting two randomly selected heads into the query and key bi-haar basis. We consider the 400 tensor basis vectors with largest support size. The right column is a visualization of the basis vectors defined for the tree associated with the attention heads. The top and bottom rows are for the TXL and VIT networks, respectively. The distribution of expansion coefficients is similar for all attention heads.

\begin{figure}[H]
    \centering
    \includegraphics[width=0.95\textwidth]{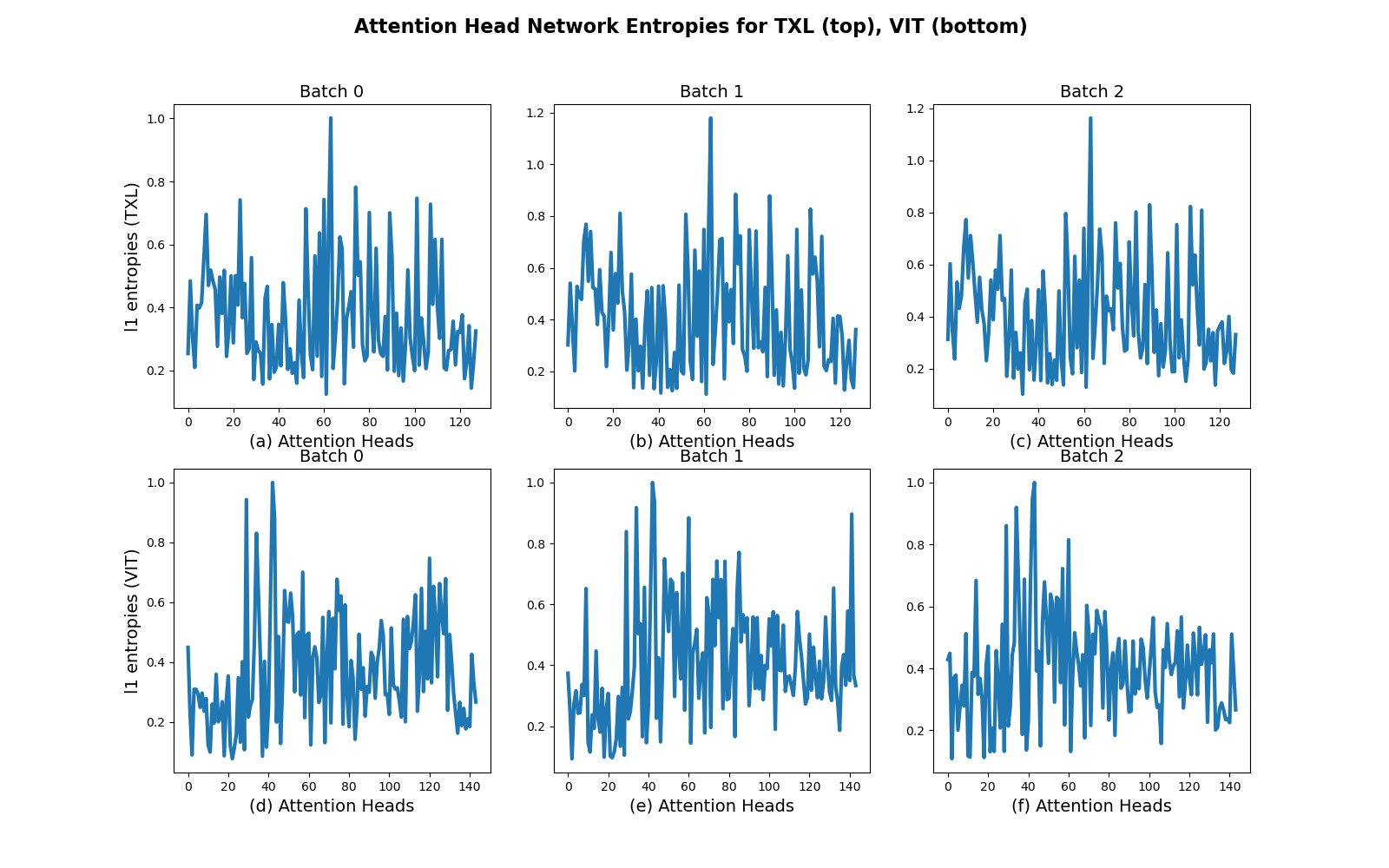} 
    \caption{Network $l_1$ entropies, obtained by projecting a function into tensor product of query and key bases. (a) - (c) Network entropy from projecting into query-key tensor basis (d) - (f) Network entropy from projecting into query-key tensor basis}
    \label{fig:sample}
\end{figure}

For each network we consider 3 batches each comprised of 256 and 197 tokens for the TXL and VIT, respectively. We compute the $l_1$ entropy of the expansion coefficients for each of the heads in the network by summing over the magnitude of the expansion coefficients associated with the top 400 basis vectors:

\begin{align}
\lVert \mathbf{\beta} \rVert_{l_1} =  \sum_{\text{supp} ( \psi^{\mathcal{Q}}(x) \otimes \psi^{\mathcal{H}}(y)) > \epsilon } | \bigcup_{i=0}^1\beta_{ \{l_i,k_i,j_i\}} |
\end{align}

where $\epsilon$ is a threshold for the 400 tensor basis vectors with largest support size, and $\mathbf{\beta}$ is a vector containing the 400 expansion coefficients. Note that each batch of data corresponds to a different collection of network attention heads since projection of this batch onto the query and key weight matrices results in a different attention head. Qualitatively, we can see a consistent spread of network entropies for both the TXL (top) and VIT (bottom) networks across the batches of data. An attention head with high $l_1$ entropy with respect to the query-key tensor basis relative to the rest of the network heads indicates that attention head is involved in a lot of processing as its attention head is picking up the sharp transitions within the attention head. The converse can be said for low $l_1$ entropy relative to the rest of the network. This qualitative finding (a) validates our organization method in the sense that a similar $l_1$  entropy network spread is seen for different batches within a network (b) suggests this technique could be used for model pruning; the head with low $l_1$ entropy do little processing and could be discarded.

\section{Conclusion and Limitations}

We demonstrated how penetrating the intrinsic and extrinsic structure leads to useful theoretical and empirical insights.  We rely on the questionnaire algorithm to intrinsically and extrinsically organize the attention heads.  The intrinsic organization permits the attention head to have Holder regularity, allowing us to connect this result to existing theory, revealing the self-attention mechanism is invariant to the softmax activation function. The extrinsic organization allows one to qualitatively and quantitatively understand  network structure. Qualitative inference constitutes construction of the diffusion maps algorithm on the affinity matrix between network heads as well as the hierarchical tree geometry of the attention heads, which are both output from the questionnaire algorithm. Quantitative inference consists of computing the entropy of the attention heads with respect to the bi-haar basis and the entropy of the network with respect to the tri-haar basis (see appendix). .  \\

Our main theoretical finding of softmax invariance has some restrictive assumptions. First, it requires the attention head to be organized to have mixed $\alpha$-Holder regularity. While this can be achieved using the questionnaire algorithm, the mixed $\alpha$-Holder attention head loses its original intrinsic structure since the matrix elements are permuted based on the location of the row and column indices in their respective binary trees. Incidentally, the mixed $\alpha$-Holder attention head (as organized by the questionnaire algorithm) and the original attention head are only equivalent up to large constants. A minor empirical restriction is implementation of the computational examples of the theorem requires the attention head to be a power of 2, which can force one to crop the attention matrix for further downstream analysis (e.g. VIT attention heads were cropped from 197 to 128). Additionally, the questionnaire algorithm can be sensitive to the algorithm parameters such as the affinity initialization and the weight function used during the iterative EMD process. Nevertheless, we envision and hope the theoretical and methodological contributions in this paper can be deployed for model pruning or general network interpretabilty tasks.

\bibliography{main}
\bibliographystyle{unsrt}

\newpage

\appendix

\section{Proofs}

\begin{corollary}

If we assume that for a network $T:\{t_i\}_{i=1}^n \to \hat{v}_m$,  $L(v,\hat{v}_m) \to 0, \forall \hat{v}_m \in \mathcal{X}$, as $|[P^{j+1}P'^{j'+1}(f) - P^jP'^{j'+1}(f) - P^{j+1}P'^{j'}(f) + P^jP'^{j'}(f)]| \to \infty$ holds, then the softmax activation on a mixed $\alpha$-Holder attention head can be decomposed into:
\begin{align}
&  A(\tilde{f}) =  [P^{j+1}P'^{j'+1}(f) - P^jP'^{j'+1}(f) - P^{j+1}P'^{j'}(f) + P^jP'^{j'}(f)] \nonumber \\
& +[P^{j+1}P'^{j'}(f) - P^jP'^{j'}(f)] [P^jP'^{j'+1}(f) - P^jP'^{j'}(f)]  + \Delta_{N,N'}(A,f) 
\label{eq:2}
\end{align}

where $L$ is a loss function,  $\{t_i\}_{i=1}^n $ is a set of  tokens input to the model , $\hat{v}_m$ is a prediction from $T$, $v$ is a ground truth, and $\mathcal{X}$ is a set containing possible predictions of  $T$ and the ground truth solution, and $P^{j}P'^{j'}(f)$ is the averaging operator defined in section 3. 

\end{corollary}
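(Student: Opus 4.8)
The plan is to deduce the corollary directly from the Softmax Invariance theorem stated above. That decomposition already expresses $A(\tilde f)$ as the mixed second-difference term scaled by $A'(P^jP'^{j'}(f))$, plus the product of first differences scaled by $A''(P^jP'^{j'}(f))$, plus the residual. So the only thing left to establish is that, under the two hypotheses, both multipliers may be replaced by $1$. I would take the theorem's identity as the starting line and thereby reduce the corollary to the single claim $A'(P^jP'^{j'}(f))\to 1$ and $A''(P^jP'^{j'}(f))\to 1$.

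First I would record the softmax derivatives explicitly. Writing $A_k(\tilde f_i)=e^{\tilde f_{i,k}}/\sum_j e^{\tilde f_{i,j}}$, the crucial structural feature is that the numerator is an exponential, whose first and second derivatives coincide with itself; consequently the multipliers $A'$ and $A''$ appearing in the theorem share a common exponential factor $e^{P^jP'^{j'}(f)}$, differing only through the partition function $\sum_j e^{\tilde f_{i,j}}$. This is the algebraic source of the \emph{invariance}: the two multipliers are not independent, so driving one to $1$ drives the other to $1$ as well.

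The decisive step is to turn the analytic hypothesis into convergence of this common factor to $1$. The hypothesis couples $L(v,\hat v_m)\to 0$ with $|P^{j+1}P'^{j'+1}(f)-P^jP'^{j'+1}(f)-P^{j+1}P'^{j'}(f)+P^jP'^{j'}(f)|\to\infty$; I would read this as selecting the sequence of scales $(j,j')$ along which the network is driven to zero loss while the mixed difference diverges. Since the softmax output is bounded and the prediction $\hat v_m$ must coincide with the finite target $v$ in the limit, the diverging mixed-difference term can be reconciled with a bounded, correct output only if the surviving multipliers normalize to unity, with the remaining discrepancy absorbed into the residual $\Delta_{N,N'}(A,f)\in\Lambda^{2\alpha}([0,1]^2)$. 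Substituting $A'=A''=1$ into the theorem's identity then produces exactly the claimed decomposition.

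The hard part is this last normalization step. The raw softmax derivatives are uniformly bounded (for two logits one has $A'\le\tfrac14$), so the limit cannot come from the derivatives alone; it must arise from the balance between the diverging mixed-difference term and the $\Lambda^{2\alpha}$-controlled residual along the chosen scale sequence. Making this rigorous means specifying the norm and the sequence $(j,j')$ in which the limit is taken, and verifying that the Hölder control on $\Delta_{N,N'}$ is tight enough to absorb the lower-order terms uniformly. I would isolate precisely this as a lemma on the asymptotic normalization of the paraproduct multipliers under the blow-up hypothesis, after which the corollary follows by substitution into the theorem.
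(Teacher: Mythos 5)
Your reduction is the right one and matches the paper's: start from the theorem's paraproduct identity and show that under the blow-up hypothesis the multipliers $A'(P^jP'^{j'}(f))$ and $A''(P^jP'^{j'}(f))$ can be replaced by $1$, then substitute. You also correctly observe that both multipliers collapse to a common exponential factor $e^{P^jP'^{j'}(f)}$, which is exactly the identity $A'(P^jP'^{j'}(\tilde f_m)) = A''(P^jP'^{j'}(\tilde f_m)) = e^{P^jP'^{j'}(\tilde f_m)}$ the paper uses. But at the decisive step you leave a genuine gap: you never exhibit a mechanism forcing this factor to $1$, instead gesturing at ``a balance between the diverging mixed-difference term and the $\Lambda^{2\alpha}$-controlled residual'' and the boundedness of softmax outputs, and you explicitly defer the whole matter to an unproven normalization lemma. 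That lemma \emph{is} the content of the corollary's proof; without it you have only restated the claim to be shown.

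The paper's mechanism is different from the one you sketch and is much more direct. Since $P^jP'^{j'}$ is an averaging operator, its value is governed by the mean $\mu(\tilde f_m)$ of the attention head, and the paper asserts that as $|P^{j+1}P'^{j'+1}(\tilde f_m)-P^jP'^{j'+1}(\tilde f_m)-P^{j+1}P'^{j'}(\tilde f_m)+P^jP'^{j'}(\tilde f_m)|\to\infty$ one has $\mu(\tilde f_m)\to 0$, hence $P^jP'^{j'}(\tilde f_m)\to 0$, hence $A'(P^jP'^{j'}(\tilde f_m)) = A''(P^jP'^{j'}(\tilde f_m)) = e^{0} = 1$; the loss hypothesis enters only to justify restricting to the subfamily $\{\tilde f_i\}_{i=1}^k$ of heads whose wavelet coefficients diverge (these are the heads that help the network learn), after which the argument is pointwise on each such head. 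In particular the residual $\Delta_{N,N'}(A,f)$ plays no absorbing role whatsoever --- it is carried along unchanged from the theorem --- so your proposal that the normalization ``must arise from the balance'' with the residual points in the wrong direction; indeed your own correct remark that softmax derivatives are uniformly bounded, so nothing can blow up to compensate a diverging term, shows that such a balancing argument cannot close. What is missing from your write-up is precisely the evaluation of the exponent: the claim that the blow-up hypothesis drives the mean, and therefore the averaging operator's output, to zero, making the common exponential factor equal $e^0=1$. (One may fairly object that the paper's step $\mu(\tilde f_m)\to 0$ is itself asserted rather than derived, but it is the step on which the published proof turns, and your proposal neither supplies it nor a working substitute.)
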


\begin{proof}

The intuition behind the corollary lies in the assumption: if the loss between the predicted and ground truth tokens decrease as the wavelet coefficients, $|[P^{j+1}P'^{j'+1}(f) - P^jP'^{j'+1}(f) - P^{j+1}P'^{j'}(f) + P^jP'^{j'}(f)]|$, of the self-attention matrix, $f$, tend to infinity, then it follows that the self-attention matrices with large wavelet coefficients helps the network learn.\\

Thus, if we only consider self-attention matrices, $\{ f_i \}_{i=1}^k$, with the property of  $|[P^{j+1}P'^{j'+1}(f) - P^jP'^{j'+1}(f) - P^{j+1}P'^{j'}(f) + P^jP'^{j'}(f)]| \to \infty$. We can show the collection of associated mixed $\alpha$-Holder functions $\{ \tilde{f}_i \}_{i=1}^k$ are invariant to the softmax activation. Let $A$ be the softmax activation function acting on some $\tilde{f}_m$ with the aforementioned property. Then  the constants, $A'(P^jP'^{j'}(\tilde{f}_m)) = A''(P^jP'^{j'}(\tilde{f}_m)) = e^{P^j P'^{j'}(\tilde{f}_m)} = e^{0} = 1$, since as $|[P^{j+1}P'^{j'+1}(\tilde{f}_m) - P^jP'^{j'+1}(\tilde{f}_m) - P^{j+1}P'^{j'}(\tilde{f}_m) + P^jP'^{j'}(\tilde{f}_m)]| \to \infty $,  $\mu(\tilde{f}_m) \to 0$, where $\mu(\tilde{f}_m)$ is the average of $\tilde{f}_m$. As the choice of $\tilde{f_m} \in  \{ \tilde{f}_i \}_{i=1}^k$ is arbitrary, the result holds for any attention-matrix with the specified property.

\end{proof}

\section{More experiments}

\subsection{Top/Bottom Attention Heads}

To illustrate how our methodology could be deployed for model pruning, we computed the $l_1$ entropy of each attention head in both networks with respect to the query-key tensor basis. We collected all network heads in the top 10 percent of $l_1$ entropies across 6 batches of data and did the same for the bottom 10 percent of $l_1$ entropies. We then extracted the top 3 and bottom 3 most commonly occurring attention heads in the top and bottom 10 percent of both networks and reported their modes on the right most column. The mode for the attention heads for the top 3 and bottom 3 attention heads appearing in the top and bottom 10 percent of network entropies is 6, meaning they appear in all batches considered for both networks. This suggests, the same attention heads do most of the processing regardless of the data seen, and perhaps more consequentially, the same attention heads do the fewest processing. The latter result implies one could remove attention heads with low $l_1$ entropy invariant to data from the network, potentially saving computation time.
 
\begin{table}[h!]
\centering
\caption{Top and Bottom $l_1$ Entropies of Attention Heads across 6 batches of data}
\begin{tabular}{cccc}
\toprule
Top 3/Bottom 3 (TXL) & (Layer, Head) & Count \\
\midrule
1   & (13,3)/(6,0)    & 6/6 \\
2  & (1,0)/(7,5)      & 6/6  \\
3   & (7,7)/(3,5)      & 6/6  \\
\midrule
Top 3/Bottom 3 (VIT) & (Layer, Head) & Count \\
\midrule
1   & (2,5)/(2,4)      & 6/6 \\
2   & (3,6)/(3,10)      & 6/6   \\
3   & (2,10)/(1,11)       & 6/6  \\
\bottomrule
\end{tabular}
\end{table}

\subsection{Tri-Haar Basis}

The structure of the entire network, and not just individual attention heads, can be understood through the lens of the tri-haar basis. We select 100 tri-haar basis vectors with the largest support size, and project each network 3-tensor into the tri-haar basis.\\

In table 2, we consider 5 batches of network tensor data from the WikiText-103 and CIFAR-10 datasets. The $l_1$ entropy is computed by summing the expansion coefficients of the largest 100 tri-haar basis vectors. Note there is no correlation between the two columns as each network is processing different batches. Of note, the $l_1$ entropies for TXL are consistently comparatively larger than VIT which can be interpreted as the TXL network processing more high frequency content than VIT due to the architectural set-up, but it is most likely related to the data being processed. \\

\begin{table}[H]
\centering
\caption{$l_1$ entropies of the TXL and VIT networks for 5 batches of WikiText-103 and CIFAR-10 data, respectively}
\begin{tabular}{cccc}
\toprule
Batch ID  & VIT & TXL \\
\midrule
1   & 6.84 & 1200.20 \\
2   & 3.30 & 2473.37 \\
3   & 2.47 & 2476.25 \\
4   & 5.47 & 1902.98 \\
5   & 9.77 & 2188.69 \\
\bottomrule
\label{entropies}
\end{tabular}
\end{table}

\begin{figure}[H]
    \centering
    \includegraphics[width=0.95\textwidth]{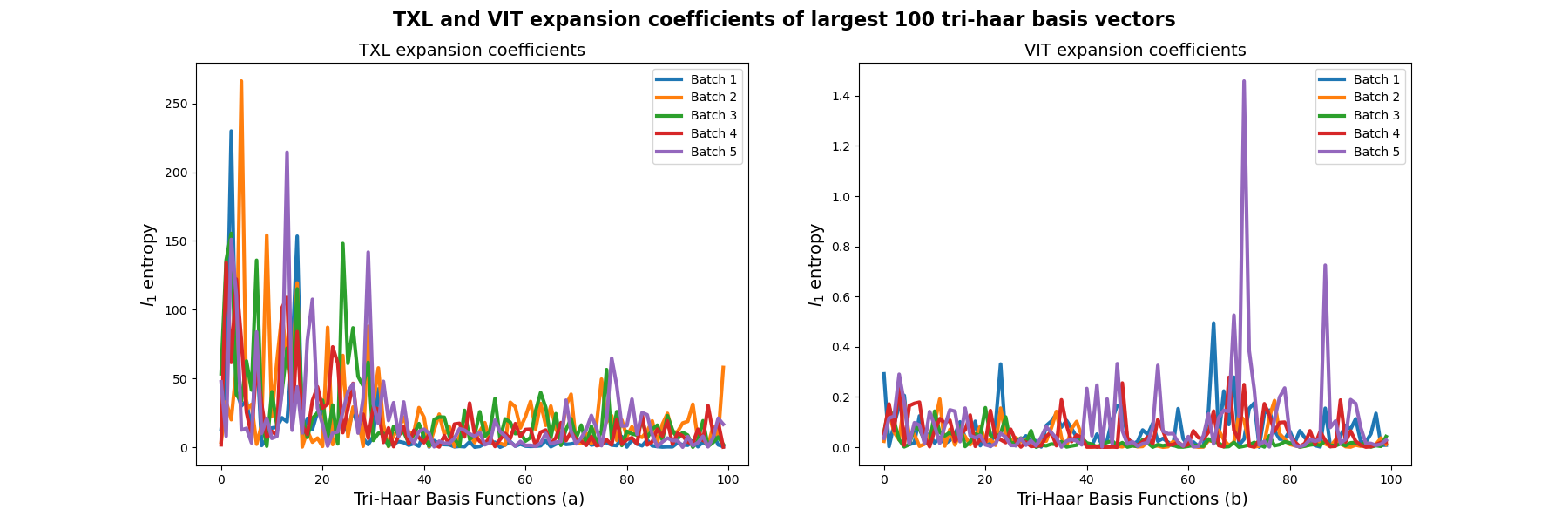} 
    \caption{TXL and VIT network entropies with respect to 100 tri-haar basis vectors with largest support size (a) VIT network entropies (b) TXL network entropies }
    \label{expan_coeffs}
\end{figure}

We visualized the expansion coefficients for the top 100 tri-haar basis vectors for the TXL and VIT networks for 5 batches associated with the WikiText-103 and CIFAR-10-10 datasets. The distribution of expansion coefficients is relatively consistent for both networks, with the TXL expansion coefficients being comparatively larger. Both table \ref{entropies} and figure  \ref{expan_coeffs} suggests the proposed methodology is suitable for comparative analysis amongst different networks and datasets.

\section{Experimental Details}

\subsection{Experimental results and reproducibility/settings and details/compute resources}

\textbf{TXL/WikiText-103}

We trained the transformer-xl (TXL) model \href{https://github.com/kimiyoung/transformer-xl}{TXL} on the \href{https://paperswithcode.com/dataset/wikitext-103}{WikiText-103} dataset. We used a model dimension (i.e. dimension of each token) of 410 with a context length of 256. We set the number of self-attention layers to 16, with each layer containing 8 attention heads. We trained with the adam optimizer with a learning rate of $2.5 \times 10^{-3}$. During test time, we randomly select all 128 network attention heads associated with 10 batches (collection of 256 tokens) of data. This model was trained with NVIDIA's A100 GPU.

\textbf{VIT/CIFAR-10}

We trained the vit b16 model \href{(https://docs.pytorch.org/vision/stable/models/generated/torchvision.models.vit_b_16.html)}{VIT} on the \href{https://www.cs.toronto.edu/~kriz/cifar.html}{CIFAR-10} dataset. We use pre-trained ViT B/16 weights and perform 1 epoch of fine-tuning training. We used the adam optimizer with a learning rate of $3 \times 10^{-5}$ and cross entropy loss. During test time, we randomly select all 144 network attention heads associated with 10 batches (collections of 197 tokens of data) This model was trained with NVIDIA's T4 GPU.

\subsection{Open access to data and code}

All datasets used are publicly available. The code can be found at \href{https://github.com/obfasina/OrganizedAttention/}{https://github.com/obfasina/OrganizedAttention/}

\subsection{Experimental statistical significance}

All computational experiments are done to demonstrate potential use cases; no general claims are made about concerning network inference. Thus, we do not consider the statistical significance of our results.

\end{document}